\newtheorem{theorem}{\textit{Theorem}}
\newtheorem{lemma}{\textit{Lemma}}
\newtheorem*{corollary}{\textit{Corollary}}
\title{Subgroups of minimal index in polynomial time}
\author{S.V.~Skresanov\thanks{\rm The work is supported by Russian Science Foundation (project 14-21-00065)}}
\date{\vspace{-5ex}}
\begin{document}

\vspace{-4ex}
{\hfill \small MSC 20E28, 20B40, 05E18}

{\let\newpage\relax\maketitle}

\begin{abstract}
	Let \( G \) be a finite group and let \( H \) be a proper subgroup of \( G \) of minimal index.
	By applying an old result of Y.~Berkovich, we provide a polynomial algorithm for computing \( |G : H| \) 
	for a permutation group~\( G \). Moreover, we find \( H \) explicitly if \( G \) is given by a Cayley table.
	As a corollary, we get an algorithm for testing whether a finite permutation group acts on a tree or not.
\end{abstract}
\noindent{\small\textbf{Keywords:} subgroup of minimal index, minimal permutation representation, 
group representability problem, group representability on trees, permutation group algorithms.}

\section{Introduction}

In \cite{dutta} S.~Dutta and P.P.~Kurur introduced the following:
\medskip

\textbf{Group representability problem. }\textit{Given a group \( G \)
and a graph \( \Gamma \) decide whether there exists a nontrivial
homomorphism from \( G \) to the automorphism group of \( \Gamma \).}
\medskip

By \cite[Theorem~3]{dutta}, the graph isomorphism problem
reduces to the abelian group representability problem, so the latter inherits
the notorious difficulty of the former. 

As an attack from a different angle,
one can consider the problem of group representability on trees.
In \cite{dutta} authors speculate that there might be no polynomial
algorithm even for such a restriction. Nevertheless, in \cite[Theorems~6 and 8]{dutta} 
they provide a polynomial reduction of that problem to the 
\medskip

\textbf{Permutation representability problem. }\textit{Given a group \( G \) and a positive integer \( n \), 
decide whether there exists a nontrivial homomorphism from \( G \)
into the symmetric group \( Sym_n \).}
\medskip

Denote by \( \kappa(G) \) the degree of a minimal (not necessarily faithful)
nontrivial permutation representation of \( G \). Since such permutation representations
are always transitive, we see that \( \kappa(G) = \min \{ |G : H| \mid H < G \} \).
Notice that permutation representability problem reduces to the task of computing \( \kappa(G) \),
since for \( n \geq \kappa(G) \) there always exists a nontrivial homomorphism from \( G \) into~\( Sym_n \).

Now, let \( \mu(G) \) be the degree of a minimal faithful permutation representation of \( G \).
Obviously \( \kappa(G) \leq \mu(G) \) and the equality should not hold in general.
The following not widely known theorem of Berkovich tells us exactly when it holds.

\begin{theorem}[{\cite[Theorem~1]{berk}}]\label{core}
	Let \( G \) be a finite group. \( G \) is simple if and only if \( \kappa(G) = \mu(G) \).
\end{theorem}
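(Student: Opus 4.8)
The plan is to prove the two implications separately (we may assume $G\neq 1$); the forward one is immediate and the converse is where the work lies. If $G$ is simple and $H<G$ has minimal index, then the action of $G$ on $G/H$ is a nontrivial permutation representation of degree $\kappa(G)$ whose kernel is a proper normal subgroup of $G$, hence trivial; so this representation is faithful, giving $\mu(G)\le\kappa(G)$, which with the evident $\kappa(G)\le\mu(G)$ yields equality. For the converse I would argue by contraposition: assuming $G$ is not simple, I shall exhibit a proper subgroup of index strictly less than $\mu(G)$, so that $\kappa(G)<\mu(G)$. Suppose, for contradiction, $\kappa(G)=\mu(G)=n$.

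The first step is to reduce to a transitive minimal faithful representation. Decompose a faithful degree-$n$ representation of $G$ into orbits $\Omega_1,\dots,\Omega_r$, with $K_i$ the kernel of the action on $\Omega_i$ (so $\bigcap_iK_i=1$); minimality of $n$ forbids one-point orbits, so each $G/K_i$ is nontrivial and $|\Omega_i|\ge\kappa(G/K_i)$, the orbit being a faithful transitive $G/K_i$-set. Since a proper subgroup of $G/K$ of index $m$ pulls back to a proper subgroup of $G$ of index $m$, one has $\kappa(G/K)\ge\kappa(G)$ for every proper quotient, whence $n=\sum_i|\Omega_i|\ge r\,\kappa(G)=rn$ and $r=1$. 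Thus the representation is transitive, with point stabilizer $H$ of index $n$ and $\mathrm{Core}_G(H)=1$ (and $H\neq 1$, since $G$, being non-simple, is not cyclic of prime order). Now pick a minimal normal subgroup $N$ of $G$, so $1\neq N\neq G$. As $H$ is core-free, $N\nleq H$, so $NH$ properly contains $H$ and hence equals $G$ by minimality of $|G:H|$. Setting $N_0=N\cap H$ gives $|N:N_0|=|G:H|=n$, and $\mathrm{Core}_N(N_0)$ — normalized by $N$ and by $H$ (which normalizes both $N$ and $N\cap H$), hence normal in $NH=G$ and contained in $H$ — is trivial, i.e.\ $N_0$ is core-free in $N$.

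If $N$ is abelian, say $N\cong C_p^m$, then $N_0\trianglelefteq N$ and core-freeness forces $N_0=1$, so $G=N\rtimes H$, $n=|N|=p^m$, and minimality of $N$ makes $H$ act faithfully and irreducibly on $N$, so $H\le\mathrm{Aut}(N)$. Classifying the maximal subgroups of $N\rtimes H$ — those containing $N$ come from maximal subgroups of $H$, while those not containing $N$ meet $N$ in a $G$-normal (hence, by minimality of $N$, trivial) subgroup and so have index $|N|$ — yields $\kappa(G)=\min\{\kappa(H),p^m\}$, and since $\kappa(G)=p^m$ this gives $\kappa(H)\ge p^m$. But $H$ acts faithfully on the $p^m-1$ nonzero vectors of $N$, so $\kappa(H)\le\mu(H)\le p^m-1<p^m$, a contradiction.

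The remaining case, $N\cong S^k$ with $S$ non-abelian simple, is the one I expect to be the real obstacle, and here I would invoke two standard facts: $\mu(S^k)=k\mu(S)$ (so $\mu(N)=k\kappa(S)$ by the forward implication), and $|\mathrm{Out}(S)|<\kappa(S)$ for every non-abelian simple $S$ (a consequence of the classification). Let $S_1$ be a simple direct factor of $N$; it is one of the $k$ factors $S_1,\dots,S_k$, which $G$ permutes transitively by conjugation since $N$ is minimal normal, so $|N_G(S_1)|=|G|/k$, and $N_G(S_1)/C_G(S_1)\hookrightarrow\mathrm{Aut}(S_1)=\mathrm{Aut}(S)$ gives $|C_G(S_1)|\ge|G|/(k\,|\mathrm{Aut}(S)|)$. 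As $S_1\cap C_G(S_1)=Z(S_1)=1$, the subgroup $S_1\times C_G(S_1)$ of $G$ has index at most $k\,|\mathrm{Out}(S)|<k\kappa(S)=\mu(N)\le\mu(G)=\kappa(G)$; if it is proper we contradict the definition of $\kappa(G)$. The only way it fails to be proper is $G=S_1\times C_G(S_1)$, which — both factors being normal — forces $S_1\trianglelefteq G$ and hence $k=1$; then $N=S$, $G=S\times C_G(S)$, and either $C_G(S)=1$ and $G=S$ is simple, or $C_G(S)\neq 1$, which is impossible because $\kappa(G)\le\kappa(S)=\mu(S)\le\mu(G)=\kappa(G)$ forces the minimal faithful representation of $G$ to restrict on $S$ to its minimal — hence transitive, primitive, non-regular — faithful representation, whose centralizer in the symmetric group is trivial, so $C_G(S)$ would have to act trivially, contradicting faithfulness. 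The delicate point throughout the non-abelian case is keeping the counting uniform in $k$, which is precisely why one wants the sharp inequality $|\mathrm{Out}(S)|<\kappa(S)$ rather than a crude bound on group orders.
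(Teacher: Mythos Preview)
Your argument is correct, but it takes a very different and much heavier route than the paper's. The paper reproduces a short, entirely elementary proof due to Isaacs: starting from a core-free $H$ of index $\kappa(G)$ and a nontrivial proper normal subgroup $N$, one picks $U\le H$ minimal with $NU=G$, chooses $g$ with $U\nleq H^g$, sets $V=U\cap H^g$, and computes $|G:NV|\le|U:V|<|G:H|$ (the last inequality because $UH^g\subseteq HH^g\subsetneq G$), forcing $NV=G$ against the minimality of $U$---no case division, no structure theory of minimal normal subgroups, no classification. Your proof, by contrast, splits on the structure of a minimal normal subgroup; the abelian branch is clean and self-contained, but the non-abelian branch rests on $\mu(S^k)=k\,\mu(S)$ and, decisively, on $|\mathrm{Out}(S)|<\mu(S)$ for every non-abelian simple $S$, which has no known classification-free proof. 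So you have established an elementary statement by invoking CFSG, whereas the paper's argument stays entirely within basic subgroup-index arithmetic. The upside of your approach is that it makes the obstruction structurally explicit inside the socle, which could be useful for quantitative refinements; the cost is a dependence on the classification for what is in fact an elementary theorem.
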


As a consequence, if \( H \) is a proper subgroup of minimal index in \( G \), then \( G/core_G(H) \)
is a simple group, where \( core_G(H) = \bigcap_{g \in G} H^g \). This observation allows one to
search for subgroups of minimal index only in simple quotients of~\( G \). We have the following result.

\begin{theorem}\label{algoP}
	Let \( G \) be a finite permutation group given by generators. Then \( \kappa(G) \) can
	be computed in polynomial time in the degree of \( G \).
\end{theorem}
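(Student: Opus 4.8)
The plan is to combine Berkovich's theorem (Theorem~\ref{core}) with the standard library of polynomial-time permutation group algorithms. The conceptual heart is a reduction to simple quotients: I claim that for a nontrivial \( G \),
\[
\kappa(G) = \min\{\,\kappa(S) : S \text{ a simple quotient of } G\,\}.
\]
The inequality ``\( \le \)'' is immediate, since composing an epimorphism \( G \twoheadrightarrow S \) with a faithful action of \( S \) of degree \( \kappa(S) \) gives a nontrivial action of \( G \) of the same degree. For the reverse inequality, pick \( H < G \) with \( |G:H| = \kappa(G) \) and put \( N = core_G(H) \); by the remark following Theorem~\ref{core} the group \( S := G/N \) is simple, and then \( \kappa(G) = |G:H| = |S : H/N| \ge \kappa(S) \ge \min \). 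Combining the two directions gives the displayed identity, and moreover for a simple group \( \kappa(S) = \mu(S) \) again by Theorem~\ref{core}, i.e.\ \( \kappa(S) \) is just the minimal index of a proper subgroup of \( S \). Since Theorem~\ref{algoP} asks only for the number \( \kappa(G) \), and not for \( H \), it suffices to enumerate the simple quotients of \( G \) and to compute \( \mu(S) \) for each.

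The second step bounds and organizes this search. Writing \( d := \deg(G) \), the action of \( G \) on a nontrivial transitive constituent shows \( \kappa(G) \le d \), so only simple quotients \( S \) with \( \kappa(S) \le d \) are relevant. For abelian simple quotients, \( S \cong C_p \) with \( \kappa(S) = p \), and \( C_p \) is a quotient of \( G \) exactly when \( p \mid [G:G'] \); computing \( G' \) as the normal closure of the commutators of a generating set and then testing \( [G:G'] \bmod p \) for each prime \( p \le d \) is polynomial and produces the best abelian contribution. For nonabelian simple quotients I would compute a composition series of \( G \) — a classical, if heavy, result in permutation group algorithmics, which also yields the (polynomially many) isomorphism types of the composition factors together with constructive names for them. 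A short argument shows that every nonabelian simple quotient of \( G \) is among these types: passing to \( \bar G := G/\mathrm{rad}(G) \), whose socle is a direct product of nonabelian simple groups, one checks that any such quotient must in fact be a \emph{direct factor} of \( \bar G \), hence one of the simple factors of \( \mathrm{Soc}(\bar G) \). For each candidate type \( S \) I would (a) decide whether \( S \) genuinely occurs as a quotient of \( G \), using the computable structure of \( \mathrm{Soc}(\bar G) \) and the action of \( \bar G \) on its simple components, and (b) read off \( \mu(S) = \kappa(S) \) from the known uniform description of minimal-index subgroups of finite simple groups, using the name of \( S \) already produced. The answer is the minimum of all the values thus collected.

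I expect the main obstacle to be organizational rather than mathematical: assembling the correct suite of polynomial-time primitives — a composition series, the solvable radical, the socle, the induced action on socle components, and constructive identification of the simple composition factors — and confirming that each step runs in time polynomial in \( d \), all of which ultimately rests on the classification of finite simple groups. The one point that genuinely requires care on the group-theoretic side is that a composition factor of \( G \) need not be a quotient of \( G \) (for instance \( S \wr C_2 \) has \( S \) as a composition factor but only \( C_2 \) as a simple quotient), so the membership test in step (a) cannot be omitted; the reduction to direct factors of \( \bar G \) is what makes that test routine once the socle is known.
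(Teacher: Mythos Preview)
Your overall strategy coincides with the paper's: reduce via Theorem~\ref{core} to
\[
\kappa(G)=\min\{\mu(S):S\text{ a simple quotient of }G\},
\]
enumerate the isomorphism types of composition factors, decide for each whether it actually occurs as a quotient, and read off $\mu(S)$ from the classification. You also correctly flag the key subtlety that a composition factor need not be a quotient.

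However, the justification you give for step~(a) contains a false structural claim. You assert that every nonabelian simple quotient $S$ of $G$ is a \emph{direct factor} of $\bar G=G/\mathrm{rad}(G)$, hence isomorphic to a simple factor of $\mathrm{Soc}(\bar G)$. This fails. Take $\bar G=T^{\,|A_5|}\rtimes A_5$ for a nonabelian simple $T\not\cong A_5$, with $A_5$ acting regularly on the copies. Then $\mathrm{rad}(\bar G)=1$, $\mathrm{Soc}(\bar G)=T^{60}$ is the unique minimal normal subgroup, and $A_5\cong\bar G/T^{60}$ is a simple quotient of $\bar G$; yet $A_5$ is neither a direct factor of $\bar G$ (any normal complement to $T^{60}$ would centralize it, while $C_{\bar G}(T^{60})=1$) nor isomorphic to any factor of the socle. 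So your proposed test, as stated, would miss $A_5$ here. Conversely, in $T\wr C_2$ the group $T$ is a socle factor but not a quotient, as you yourself note; thus ``socle factor of $\bar G$'' is neither necessary nor sufficient for being a nonabelian simple quotient, and the reduction of the quotient test to the socle data is not routine in the way you suggest.

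The paper handles this point with a single clean primitive: for each simple $S$ it uses the residual $O^S(G)$, the smallest normal subgroup such that every composition factor of $G/O^S(G)$ is isomorphic to $S$. One has $O^S(G)<G$ if and only if $S$ is a simple quotient of $G$, and $O^S(G)$ is known to be computable in polynomial time. This replaces both your abelian/nonabelian case split and your socle-based test by the uniform formula
\[
\kappa(G)=\min\{\mu(S):S\in\Sigma,\ O^S(G)<G\},
\]
where $\Sigma$ is the set of composition-factor types of $G$.
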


\begin{corollary}
	The group representability on trees where the group is presented
	as a permutation group via a generating set can be solved in polynomial time.
\end{corollary}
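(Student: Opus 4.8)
The plan is to obtain the corollary by composing Theorem~\ref{algoP} with the two polynomial-time reductions recorded in the introduction. Explicitly: (1)~by \cite[Theorems~6 and 8]{dutta}, deciding group representability on trees reduces in polynomial time to the permutation representability problem; (2)~an instance $(G,n)$ of permutation representability is a \emph{yes}-instance precisely when $G$ is nontrivial and $n\ge\kappa(G)$ — for $n\ge\kappa(G)$ a minimal nontrivial permutation representation of $G$ extends to $\mathrm{Sym}_n$ by fixing the surplus points, while for $n<\kappa(G)$ any nontrivial homomorphism $G\to\mathrm{Sym}_n$ would yield a nontrivial permutation representation of degree below $\kappa(G)$, a contradiction — so permutation representability reduces in polynomial time to the computation of $\kappa(G)$; (3)~by Theorem~\ref{algoP}, $\kappa(G)$ is computable in polynomial time in the degree of $G$. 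Chaining (1)--(3) produces a polynomial-time algorithm for the tree case, which is the assertion.

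First I would verify the two easy links against the input convention of the corollary, namely that the group is given as a permutation group by a generating set and that running time is measured in its degree~$n_0$. Step~(3) is literally the hypothesis of Theorem~\ref{algoP}. For step~(2) the key observation is that $\kappa(G)\le n_0$ (a nontrivial orbit of $G$ has size at most $n_0$), so the integer $n$ in the instance only ever has to be compared with a quantity bounded by $n_0$: if $n\ge n_0$ answer \emph{yes} (for $G$ nontrivial), and otherwise invoke Theorem~\ref{algoP} and compare. Hence the potentially large size of $n$ is irrelevant and no full evaluation of $\kappa(G)$ beyond this comparison is even needed.

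The genuine obstacle — and the one step that requires inspecting \cite{dutta} rather than just quoting it — is link~(1): one must check that the Dutta--Kurur reduction, when fed a tree-representability instance whose group is presented as a permutation group by generators, returns a permutation-representability instance whose group is again a permutation group by generators of polynomially bounded degree, so that Theorem~\ref{algoP} applies to the output. If the reduction instead replaces the group by a subgroup, a section, or some other derived group, I would need to confirm that this group can be exhibited as a permutation group by generators in time polynomial in~$n_0$; the standard machinery for permutation groups makes this plausible, but it is exactly the point where the argument could break if the cited reduction implicitly assumed a costlier presentation of the group (a Cayley table, say). Granting this compatibility, the corollary is immediate.
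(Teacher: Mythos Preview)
Your approach is exactly the paper's: the corollary is stated without a separate proof and is meant to follow immediately by chaining Theorem~\ref{algoP} with the two reductions already recorded in the introduction (the Dutta--Kurur reduction to permutation representability, and the observation that permutation representability is decided by comparing $n$ with $\kappa(G)$). Your additional care about whether the Dutta--Kurur reduction respects the permutation-group-by-generators input model is a legitimate point the paper leaves implicit; otherwise there is nothing to add.
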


We note that in \cite{dutta} authors are mainly focused on groups given by Cayley tables,
so we in fact answered a more general question.

Notice that we do not claim to find the subgroup of minimal index itself (which is required to reconstruct
the corresponding action of a group on a tree). Nevertheless, in the case when
the group is given by its Cayley table, it is possible to enumerate all such subgroups.

\begin{theorem}\label{algoC}
	Let \( G \) be a finite group given by its Cayley table. Then the set \( \{ H < G \mid |G : H| = \kappa(G) \} \)
	can be computed in time polynomial in \( |G| \).
\end{theorem}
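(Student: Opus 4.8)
The plan is to reduce, using the corollary to Theorem~\ref{core}, to the computation of minimal-index subgroups inside the \emph{simple} quotients of \( G \). First observe that every subgroup \( H < G \) of minimal index is automatically \emph{maximal} in \( G \), since any intermediate subgroup would have smaller index; moreover, by the corollary, \( N := core_G(H) \) is a \emph{maximal normal} subgroup of \( G \) (because \( G/N \) is simple), and \( H/N \) is a subgroup of minimal index in \( G/N \). Conversely, for any maximal normal subgroup \( M \) of \( G \), the full preimage in \( G \) of a subgroup of minimal index of \( G/M \) is a subgroup of \( G \) of index \( \kappa(G/M) \ge \kappa(G) \). Consequently
\[ \kappa(G) = \min\{\, \kappa(G/M) \mid M \text{ a maximal normal subgroup of } G \,\}, \]
and the set \( \{\, H < G \mid |G:H| = \kappa(G) \,\} \) is exactly the union, over those \( M \) attaining the minimum, of the full preimages in \( G \) of the subgroups of minimal index of \( G/M \); there are no repetitions, since \( H \) recovers \( M = core_G(H) \). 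So it remains to (i)~list all maximal normal subgroups of \( G \), and (ii)~for each resulting simple quotient, compute its minimal index and list all of its minimal-index subgroups, after which the pullbacks and the final minimisation are immediate.

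For~(i): a maximal normal subgroup with abelian quotient has prime index and contains the derived subgroup \( G' \), so these are precisely the full preimages of the index-\( p \) subgroups of the finite abelian group \( G/G' \), as \( p \) runs over the primes dividing \( |G/G'| \); the subgroup \( G' \) is computable from the Cayley table in polynomial time, and for each \( p \) the index-\( p \) subgroups are the hyperplanes of the \( \mathbb{F}_p \)-vector space \( (G/G')/p(G/G') \), of which there are fewer than \( |G| \) and which are enumerated directly. A maximal normal subgroup with nonabelian simple quotient contains the solvable radical \( R \) of \( G \); passing to \( G/R \), which has trivial solvable radical and hence embeds into the automorphism group of its socle (a direct product of nonabelian simple groups), such subgroups are read off from the socle together with a chief series of \( G/R \) --- all of which is computable in polynomial time --- and in any case there are at most \( \log_{60}|G| \) of them, since the intersection of \( t \) distinct ones has index at least \( 60^t \) in \( G \). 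Thus the maximal normal subgroups of \( G \), polynomially many in number, can be produced in polynomial time. (Alternatively, every maximal normal subgroup is the penultimate term of some chief series, so one may simply invoke the standard polynomial-time chief-series algorithms applied to the regular representation of \( G \).)

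For~(ii): let \( S = G/M \) be simple, presented by its Cayley table (its order divides \( |G| \)); each subgroup of \( S \) of minimal index is maximal. If \( S \cong \mathbb{Z}_p \), the only such subgroup is the identity, of index \( p \). If \( S \) is nonabelian simple, then \( \kappa(S) = \mu(S) \) by Theorem~\ref{core}, which we compute by running the algorithm of Theorem~\ref{algoP} on the regular representation of \( S \); and the subgroups of index \( \kappa(S) \), being maximal, are polynomially many and can be listed in polynomial time. One concrete route: fix a prime \( q \) dividing \( |S| \) but not \( \kappa(S) \) --- such a prime exists for every nonabelian simple group --- so that in the transitive degree-\( \kappa(S) \) action a fixed Sylow \( q \)-subgroup \( Q \) of \( S \) must fix a point; hence every minimal-index subgroup of \( S \) has an \( S \)-conjugate containing \( Q \), and it suffices to enumerate the (boundedly many) maximal subgroups of \( S \) containing \( Q \) that have index \( \kappa(S) \), and then close under \( S \)-conjugacy. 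Assembling the output of~(i) and~(ii) yields \( \kappa(G) \) and the required set of subgroups.

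I expect the real work to lie in step~(ii): converting the \emph{computation} of the minimal faithful degree of a simple group into an explicit \emph{enumeration} of all its minimal-index subgroups, given nothing but a multiplication table, and in particular pinning down the polynomial bound on the number of these subgroups and reliably producing a representative of each conjugacy class. By contrast, the reduction in step~(i) and the final assembly are routine once Berkovich's theorem (Theorem~\ref{core}) is available.
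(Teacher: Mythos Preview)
Your overall architecture matches the paper's: use Berkovich's theorem to reduce to the simple quotients \(G/M\) over the maximal normal subgroups \(M\), list the minimal-index (hence maximal) subgroups of each such quotient, and pull back. The divergence, and the gap, is entirely in your step~(ii).

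For~(i) you work harder than necessary. The paper simply invokes \cite{babai} (Babai--Luks--Seress) for the polynomial-time computation of the set \(\mathcal{M}(G)\) of maximal normal subgroups, already available for permutation groups and hence for the regular representation; your abelian/nonabelian split and socle analysis are correct but superfluous here.

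Step~(ii), however, is not an algorithm as written. You assert without justification that some prime \(q\) divides \(|S|\) but not \(\kappa(S)\); even granting this, you give no procedure for \emph{enumerating} the maximal subgroups of \(S\) that contain a fixed Sylow \(q\)-subgroup \(Q\), nor any bound on their number. Knowing that every minimal-index subgroup has an \(S\)-conjugate containing \(Q\) does not tell you how to produce those conjugates from the multiplication table. You yourself flag this in your final paragraph, so the proposal is openly incomplete at precisely the point where the paper supplies the missing idea.

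The paper's device is a single external theorem: by Burness, Liebeck and Shalev \cite{burness}, every maximal subgroup of a finite simple group is generated by at most four elements. One therefore runs over all \(|S|^4\) four-tuples of elements of \(S\), generates the corresponding subgroup, and tests it for maximality (a polynomial-time check against the Cayley table). This yields \emph{all} maximal subgroups of \(S\) outright; one then keeps those of index \(\kappa(G)\) and pulls them back to \(G\). No Sylow argument, no case analysis on \(S\), and no separate count of minimal-index subgroups is required: the \(|S|^4\) search space is itself the polynomial bound.
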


It might be very plausible that (at least one) subgroup of minimal index can be computed in polynomial time
in the case of permutation groups, but it most certainly would need a more advanced machinery.

The author would like to express his gratitude to prof.~Avinoam Mann, who pointed out that Theorem~\ref{core}
was proved earlier and gave the reference to Berkovich's paper.

\section{Proof of Theorem~\ref{core}}

The article \cite{berk} besides the original proof by Berkovich (originating in \cite{berkorig})
contains another very short and elegant proof attributed by the author to M.I.~Isaacs.
We reproduce it with almost no changes for the sake of completeness.

If \( G \) is simple, then clearly \( \kappa(G) = \mu(G) \). Therefore it suffices to prove the converse statement.

Let \( H \) be a subgroup of index \( \kappa(G) \) in \( G \) such that \( core_G(H) = 1 \).
Suppose that \( N \) is a nontrivial proper normal subgroup of \( G \). Since \( H \) is maximal,
we have \( G = NH \). Let \( U \) be a subgroup of \( H \) minimal with \( G = NU \). 
Obviously \( U > 1 \), and \( U \) does not lie in \( H^g \) for some \( g \in G \).
Set \( V = U \cap H^g < U \). We have
\[ |G : NV| = |NU : NV| = \frac{|N||U||N \cap V|}{|N||V||N \cap U|} \leq |U : V| < |G : H|,\]
since \( |U : V| = |UH^g : H^g| = |UH^g|/|H| \) and \( UH^g \subseteq HH^g \subset G \). 
By minimality of \( |G : H| \) it follows that \( G = NV \), contrary to the choice of \( U \).

\section{Proof of Theorem~\ref{algoP}}

In what follows, we assume the standard polynomial-time toolbox from \cite{seress}.

Let \( S \) be a simple group. Denote by \( O^S(G) \) the minimal
normal subgroup of~\( G \) such that each composition factor of \( G/O^S(G) \) is isomorphic to \( S \).
It is noted in \cite{seress} that an algorithm for computing \( O^S(G) \) in polynomial time is implicit in \cite{babai}.

Now let \( G \) be a permutation group given by its generators. Compute the composition series of \( G \),
and let \( \Sigma \) be the collection of isomorphism types of composition factors. By Theorem~\ref{core},
if \( H \) is a subgroup of minimal index, then it contains the maximal normal subgroup \( N = core_G(H) \). 
The quotient \( G/N \) is simple, therefore its isomorphism type \( S \) lies in \( \Sigma \) and \( O^S(G) \leq N < G \). 
Moreover, \( \kappa(G) = \kappa(G/N) = \mu(S) \), so
\[ \kappa(G) = \min \{ \mu(S) \mid S \in \Sigma, \, O^S(G) < G \}, \]
where \( \mu(S) \) can be found by checking the description of minimal faithful 
permutation representations of finite simple groups, which is well-known (for example, see \cite[Table~4]{guest}
for groups of Lie type and \cite[Table~4]{mazurov} for sporadic simple groups).
Since all steps can be performed in polynomial time, we obtain the required algorithm.

\section{Proof of Theorem~\ref{algoC}}

The key observation is the following.

\begin{lemma}\label{simpmax}
	Let \( G \) be a finite simple group given by its Cayley table.
	Then the set of maximal subgroups of \( G \) can be computed in time polynomial in \( |G| \).
\end{lemma}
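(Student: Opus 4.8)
The plan is to reduce the computation to the enumeration of subgroups generated by a \emph{bounded} number of elements. The fact I would isolate first --- call it \( (\ast) \) --- is that there is an absolute constant \( c \) such that every maximal subgroup of every finite simple group can be generated by \( c \) elements. This follows from the classification of finite simple groups together with the known structure of their maximal subgroups. For \( G = A_n \) one invokes the O'Nan--Scott theorem: an intransitive or imprimitive maximal subgroup is a section of a wreath product of two symmetric groups and is at once boundedly generated, while a primitive maximal subgroup is either of affine type \( AGL(d,p) \) acting on \( p^d \) points, hence \( 3 \)-generated, or almost simple, diagonal, product type, or twisted-wreath type, and in each of these the relevant normal subgroup is generated by boundedly many elements \emph{as a normal subgroup} of \( M \), whence \( M \) itself is boundedly generated. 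For classical \( G \) one uses Aschbacher's theorem: in each geometric class \( \mathcal{C}_1,\dots,\mathcal{C}_8 \) a maximal subgroup is assembled, in a bounded number of steps, from smaller classical groups, abelian sections of bounded rank, wreathing by a symmetric group, and a unipotent radical that is generated by \( O(1) \) elements as a normal subgroup, and one checks that bounded generation survives each step; a subgroup \( H \) in the class \( \mathcal{S} \) is almost simple, so \( d(H) \le d(\operatorname{soc}(H)) + d(H/\operatorname{soc}(H)) \) is bounded, using that simple groups are \( 2 \)-generated and that every subgroup of the outer automorphism group of a finite simple group is boundedly generated. For the exceptional groups of Lie type and the sporadic groups the list of maximal subgroups is explicit and finite, so \( (\ast) \) is clear there. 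I expect establishing \( (\ast) \) uniformly to be the main obstacle: it is genuinely classification-dependent, and some care is needed to bound the \emph{normal} generation number of unipotent radicals and socles, since the crude estimate \( d(M) \le \log_2 |M| \) is far too weak for a polynomial bound.

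Granting \( (\ast) \), the algorithm is short. Using the standard polynomial-time toolbox for groups given by a Cayley table \cite{seress}, I would: (1) list all subsets \( X \subseteq G \) with \( |X| \le c \); there are at most \( |G|^{c} \) of them; (2) for each \( X \), compute \( \langle X \rangle \) by the usual closure procedure (at most \( |G| \) rounds, each performing \( O(|G|^{2}) \) table look-ups), discarding \( X \) when \( \langle X \rangle = G \); (3) let \( \mathcal{L} \) be the resulting set of proper subgroups, of size at most \( |G|^{c} \); (4) return those members of \( \mathcal{L} \) that are not properly contained in any other member of \( \mathcal{L} \), a containment \( L_{1} \le L_{2} \) being tested by checking whether the elements of \( L_{1} \) all lie in \( L_{2} \). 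Each step runs in time polynomial in \( |G| \).

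Correctness rests on \( (\ast) \) and a one-line lattice argument. By \( (\ast) \), every maximal subgroup \( M \) of \( G \) is \( c \)-generated and proper, hence \( M \in \mathcal{L} \); no member of \( \mathcal{L} \) properly contains \( M \), so \( M \) is returned. Conversely, suppose \( L \in \mathcal{L} \) is returned but is not maximal in \( G \), say \( L < K < G \), and pick a maximal subgroup \( M \) of \( G \) with \( K \le M \). Then \( M \) is proper and \( c \)-generated, so \( M \in \mathcal{L} \), and \( L \subsetneq K \le M \) contradicts the fact that \( L \) was returned. Thus the output is exactly the set of maximal subgroups of \( G \); in particular \( G \) has at most \( |G|^{c} \) maximal subgroups, so the output has size polynomial in \( |G| \), which is everything we need.

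Finally, I would remark that neither "maximal" nor the boundedness of \( c \) can be dropped: a group of order \( m \) may possess \( m^{\Theta(\log m)} \) subgroups, and even the number of its \( O(\log m) \)-generated subgroups is in general superpolynomial, so it is essential that \emph{maximality} forces a genuinely bounded generating set.
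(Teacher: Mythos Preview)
Your approach is essentially the paper's: enumerate all subgroups generated by a bounded number of elements and filter for the maximal ones. The paper shortcuts your case-by-case sketch of \( (\ast) \) by citing \cite{burness} directly for the explicit bound \( c = 4 \), and tests maximality of each candidate \( H \) individually (e.g.\ by checking that \( \langle H, g \rangle = G \) for every \( g \notin H \)) rather than by pairwise comparison within your list \( \mathcal{L} \); both variants are polynomial and correct.
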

\begin{proof}
	Try all possible \( 4 \)-tuples of elements of \( G \) (there are \( |G|^4 \) of those) 
	and generate corresponding subgroups. One can test in polynomial time 
	if a given subgroup is maximal, so we obtain the list of all maximal subgroups of \( G \)
	generated by \( 4 \) elements. By \cite[Theorem~1]{burness} every maximal subgroup of
	a finite simple group is \( 4 \)-generated, so we in fact found all maximal subgroups of \( G \).
\end{proof}

Set \( \mathcal{M}(G) = \{ N < G \mid N \textit{ is a normal subgroup of } G, \textit{ and } G/N \textit{ is simple} \} \),
and recall that we can compute \( \mathcal{M}(G) \) in polynomial time even for permutation groups
(see the proof of \cite[Lemma~7.4]{babai}). Notice that we can find the following set in polynomial time:
\[ \mathcal{A}_N = \{ H < G \mid N \leq H, \, |G : H| = \kappa(G) \}. \] 
Indeed, \( \kappa(G) \) can be computed in polynomial time by Theorem~\ref{algoP}, 
and obviously the Cayley table for \( G/N \) can be found in polynomial time, thus by Lemma~\ref{simpmax}
we can find all maximal subgroups of \( G/N \). By taking preimages and keeping only subgroups of index
equal to \( \kappa(G) \), we find the required set.

Now, by Theorem~\ref{core} every subgroup \( H \) with \( |G : H| = \kappa(G) \) contains a maximal normal subgroup.
Therefore \( \{ H < G \mid |G : H| = \kappa(G) \} = \bigcup_{N \in \mathcal{M}(G)} \mathcal{A}_N \), and this set
can be computed in polynomial time.

\noindent{\sl Saveliy V. Skresanov\\
Novosibirsk State University, 2 Pirogova Str.,\\
Novosibirsk, 630090, Russia\\
e-mail: s.skresanov@g.nsu.ru

\end{document}